\newtheorem{theorem}{Theorem}
\newtheorem{proposition}{Proposition}
\newtheorem{lemma}[proposition]{Lemma}
\newtheorem{corollary}[proposition]{Corollary}
\theoremstyle{definition}
\newtheorem{remark}{Remark}
\newcommand{\IE}{\mathbb{E}}
\newcommand{\IN}{\mathbb{N}}
\newcommand{\IP}{\mathbb{P}}
\newcommand{\IR}{\mathbb{R}}
\newcommand{\dd}{{\rm d}}
\newcommand{\dist}{\mathrm{dist}}
\newcommand{\diam}{\mathrm{diam}}
\newcommand{\vol}{\mathrm{vol}}
\title{Random approximation of convex bodies in\\ Hausdorff metric}
\author{Joscha Prochno\footnote{Faculty of Computer Science and Mathematics, University of Passau, Dr.-Hans-Kapfinger-Str. 30, 94032 Passau, Germany, {\tt joscha.prochno@uni-passau.de, mathias.sonnleitner@uni-passau.de}},
Carsten Sch\"utt\footnote{Department of Mathematics, University of Kiel, Heinrich-Hecht-Platz 6, 24118 Kiel, Germany, 
{\tt schuett@math.uni-kiel.de }},
Mathias Sonnleitner$^{*}$,
Elisabeth M. Werner\footnote{Department of Mathematics, Case Western Reserve University, 2145 Adalbert Road, Cleveland, OH 44106, USA,
{\tt elisabeth.werner@case.edu}}
}
\begin{document}

\maketitle

\abstract
While there is extensive literature on approximation, deterministic as well as random,  of general convex bodies $K$ in the symmetric difference metric, or other metrics arising from intrinsic volumes, very little is known for corresponding random results in the Hausdorff distance when the approximant $K_n$ is given by the convex hull of $n$ independent random points chosen uniformly on the boundary or in the interior of $K$. When $K$ is a polygon and the points are chosen on its boundary, we determine the exact limiting behavior of the expected Hausdorff distance between a polygon as $n\to\infty$. From this we derive the behavior of the asymptotic constant for a regular polygon in the number of vertices.

\section{Introduction}

A convex body $K$ in $\mathbb R^{d}$ can be approximated arbitrarily well by polytopes in  the symmetric difference metric or the Hausdorff metric. This can be achieved in many ways. For instance,
 by taking  the convex hull of sufficiently many random points sampled inside the convex body $K$ or on its boundary $\partial K$. In this paper we investigate  how much a convex body $K$ differs from the convex hull $K_{n}$ of $n\in\mathbb N$ points chosen at random from $K$. More specifically, we are interested in the order of magnitude of the following two quantities
$$
\mathbb P(\delta_{H}(K,K_{n})\geq t), \quad t>0,
\hskip 20mm\mbox{and}\hskip 20mm
\mathbb E[\delta_{H}(K,K_{n})],
$$
where the points spanning the polytope $K_n$ are chosen uniformly at random either from $K$ or from the boundary of $K$, with respect to the normalized Hausdorff measure of appropriate dimension. We refer to the end of this section for the definitions of the relevant notions.

There is a vast  amount  of literature on these problems when  the error of approximation is measured by the expectation of the symmetric difference metric $d_S$: for points chosen uniformly at random  inside the body, A. R\'{e}nyi and  R. Sulanke studied this question in dimension $2$ \cite{RS63,RS64};  
I. B\'ar\'any proved asymptotics  involving  the Gau\ss-Kronecker curvature for smooth convex bodies in arbitrary dimension \cite{Bar92};  C. Sch\"utt  generalized this  to arbitrary convex bodies in \cite{Sch94}. In \cite{BB93}, I. B\'ar\'any and Ch. Buchta determined the asymptotic behavior of the expected symmetric difference metric for convex polytopes in arbitrary dimensions. 

More recently, M. Reitzner \cite{Rei02} and C. Sch\"utt and E. M. Werner \cite{SW03} proved asymptotics for $\IE[ d_S(K,K_n)]$ for random points chosen from the boundary of a sufficiently smooth convex body. In \cite{RSW23} these three authors extended these results to the case when the convex bodies are polytopes. 

The extensive literature on approximation of general convex bodies, deterministic as well as random,  mostly involves  the symmetric difference metric, or other metrics coming from intrinsic volumes. For smooth convex bodies the asymptotic constants are often given as integrals of curvature(s) of $K$ over its boundary and for polytopes combinatorial quantities appear. For more information, we refer to the survey \cite{PSW22}. 
However, very little is known for corresponding results in the Hausdorff distance and, for the reader's convenience, we provide a short overview.

S. Glasauer and R. Schneider \cite{GS96} considered the case of a convex body with $C^{3}_{+}$-boundary, that is three times continuously  differentiable boundary with everywhere positive curvature, and a probability measure $\mathbb P$ with  density $h>0$ of class $C^{1}$ on the boundary. For random points sampled according to this density they proved the asymptotics
\begin{equation}
	\label{eq:gs-smooth}
\hskip -5mm
\left(\frac{n}{\log n}\right)^{\frac{2}{d-1}}\delta_{H}(K,K_{n})
\xrightarrow[n\to\infty]{\mathbb{P}}
\frac{1}{2}\left(
\frac{1}{\operatorname{vol}_{d-1}(B_{2}^{d-1})}\max_{x\in\partial K}\frac{\sqrt{\kappa_{K}(x)}}{h(x)}
\right)^{\frac{2}{d-1}},
\end{equation}
where $\kappa_K$ is the Gau\ss-Kronecker curvature, $B_2^{d-1}$ the $(d-1)$-dimensional unit ball and $ \xrightarrow[n\to\infty]{\mathbb{P}}$ denotes convergence in probability as $n\to\infty$. The case $d=2$ was already obtained by R. Schneider \cite{Sch88}.

L. D\"umbgen and G. Walther \cite{DW96} showed upper bounds for the Hausdorff distance. More precisely, they proved that if $K_n$ is the convex hull of random points uniformly distributed on the boundary $\partial K$ of an arbitrary convex body $K$, then with probability one
 \[
\delta_{H}(K,K_{n})
\le C \Big(\frac{\log n}{n}\Big)^{\alpha}
 \]
 for some constant $C>0$ and all $n\in\mathbb N$, where $\alpha=\frac{1}{d-1}$.  The exponent $\frac{1}{d-1}$ can be improved to $\frac{2}{d-1}$ if the body is smooth in the following sense: at each boundary point, there is a unique normal and this normal is Lipschitz-continuous  on $\partial K$. The latter includes the case of $C^3_+$-boundary, see, e.g., G. Walther \cite[Thm. 1]{Wal99}. In \cite{DW96},  a result for points sampled uniformly inside $K$ was proven, where $\alpha=\frac{1}{d}$ in general and $\alpha=\frac{2}{d+1}$ for smooth bodies. 

A direct consequence of the convergence in \eqref{eq:gs-smooth} and the bounds in \cite{DW96} is the following, 
which we only state for the case of the uniform distribution on $\partial K$. We prove it in Section \ref{Beweise}.

 \begin{proposition}\label{pro:smooth}
Let $K$ be of class $C^3_+$ and, for $n\in\IN$, let $K_n$ be the convex hull of $n$ independent random points uniformly distributed on $\partial K$. Then
\[
\lim_{n\to\infty}\left(\frac{n}{\log n}\right)^{\frac{2}{d-1}}\IE[ \delta_{H}(K,K_{n})]
= c_K,
\]
where $c_K\in(0,\infty)$ is the limiting constant in \eqref{eq:gs-smooth} for $h\equiv \vol_{d-1}(\partial K)^{-1}$.
\end{proposition}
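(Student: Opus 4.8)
The plan is to upgrade the convergence in probability in \eqref{eq:gs-smooth} to convergence of expectations by checking that the rescaled sequence is uniformly integrable---in fact almost surely bounded---and then applying Vitali's convergence theorem. Write $X_n:=(n/\log n)^{2/(d-1)}\delta_{H}(K,K_n)$; since $K_n\subseteq K$ we have $0\le X_n\le (n/\log n)^{2/(d-1)}\diam(K)$, so all expectations below are finite. First I would observe that the uniform distribution on $\partial K$ corresponds to the density $h\equiv\vol_{d-1}(\partial K)^{-1}$, which is constant, hence of class $C^1$, and strictly positive; as $K$ is $C^3_+$, the hypotheses of \eqref{eq:gs-smooth} (that is, of \cite{GS96}) are met, and we obtain $X_n\xrightarrow[n\to\infty]{\IP}c_K$, where substituting this $h$ into the right-hand side of \eqref{eq:gs-smooth} gives
\[
c_K=\frac12\left(\frac{\vol_{d-1}(\partial K)}{\vol_{d-1}(B_2^{d-1})}\max_{x\in\partial K}\sqrt{\kappa_K(x)}\right)^{2/(d-1)}.
\]
Since $\kappa_K$ is continuous and strictly positive on the compact set $\partial K$, the maximum is attained and $c_K\in(0,\infty)$.

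For the passage from convergence in probability to convergence in $L^1$, I would invoke the bound of \cite{DW96}. Because the $C^3_+$ boundary has Lipschitz continuous unit normal (cf. \cite{Wal99}), \cite{DW96} yields, with probability one, a deterministic constant $C>0$ such that $\delta_{H}(K,K_n)\le C(\log n/n)^{2/(d-1)}$ for all $n$; equivalently $X_n\le C$ almost surely, uniformly in $n$. A uniformly bounded family is uniformly integrable, and a uniformly integrable sequence converging in probability converges in $L^1$ to the same limit (Vitali). Hence $X_n\to c_K$ in $L^1$, so $|\IE[X_n]-c_K|\le\IE|X_n-c_K|\to 0$, which is exactly the claim $\IE[X_n]\to c_K$.

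Apart from the elementary verification that the constant density is admissible in \eqref{eq:gs-smooth} and reproduces $c_K$, the only point that needs care---and the main obstacle---is whether \cite{DW96} is available in the strong form used above (a deterministic $C$, valid for every $n$) rather than only with a random constant valid for all large $n$. Should only the weaker form be at hand, one can still establish uniform integrability of $(X_n)$ directly from a tail bound
\[
\IP\big(\delta_{H}(K,K_n)\ge t\big)\ \ls\ t^{-(d-1)/2}\exp\!\big(-c\,n\,t^{(d-1)/2}\big),\qquad 0<t\le t_0,
\]
obtained from the curvature-controlled cap arguments underpinning \cite{GS96,DW96}: a cap of a $C^3_+$ body of depth $t$ meets $\partial K$ in a set of $\vol_{d-1}$-measure of order $t^{(d-1)/2}$, and $\partial K$ is covered by $O(t^{-(d-1)/2})$ such caps. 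After the substitution $t=\lambda(\log n/n)^{2/(d-1)}$ this reads $\IP(X_n\ge\lambda)\ls n^{A}n^{-c\lambda^{(d-1)/2}}$ for a fixed $A\ge0$, whose supremum over $n\ge 2$ decays in $\lambda$ fast enough to be integrable; hence $\sup_n\IE[X_n\mathbf{1}_{\{X_n\ge\lambda_0\}}]\to 0$ as $\lambda_0\to\infty$, which is uniform integrability. Either route closes the argument; everything else is routine.
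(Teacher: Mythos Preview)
Your primary route does not work: the D\"umbgen--Walther result is an almost-sure statement about the whole sequence, so the constant $C$ depends on the realization, and a random bound $X_n\le C(\omega)$ does \emph{not} give uniform integrability (the family $\{X_n\}$ need not be dominated by a single integrable random variable, since a priori $C$ could have heavy tails). You correctly flag this as the main obstacle, and indeed it is fatal to that route as written.

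Your fallback route is correct and is essentially what the paper does. The paper extracts the needed tail bound directly from the proof of \cite[Lem.~1]{DW96} (a covering estimate for $\partial K$ by random balls), obtaining
\[
\IP(X_n\ge \ell w)\ \le\ C\,w^{-d/2}\Big(\frac{n}{\log n}\Big)^{d/(d-1)}\exp\bigl(-c\,w^{(d-1)/2}\log n\bigr),
\]
which after substitution is the same shape as your cap estimate. Rather than phrasing the conclusion via uniform integrability and Vitali, the paper splits $\IE[X_n]=\int_0^S+\int_S^\infty$ at a fixed $S>c_K$: on $[0,S]$ the integrand $\IP(X_n\ge w)\le 1$ is dominated and converges pointwise by the convergence in probability, while the tail integral is driven to zero by the bound above. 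The two packagings are equivalent; the only substantive difference is that the paper cites the explicit inequality from \cite{DW96} rather than re-deriving it from cap geometry.
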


In the case when  points are chosen independently and uniformly at random within a convex body $K\subset \IR^d$, we are only aware of exact asymptotic results in dimension two. However, for higher-dimensional smooth convex bodies there is  work in progress due to P. Calka and J. Yukich. H. Br\"aker, T. Hsing and N.H. Bingham \cite{BHB98} obtained a formula for the limiting distribution function for smooth convex bodies $K$ and convex polytopes in $\mathbb R^{2}$. More precisely, let $Q$ be a convex polytope in $\mathbb R^{2}$ with interior angles $\alpha_{1},\dots,\alpha_{M}$, and let $Q_{n}$ be the convex hull of $n$ points chosen uniformly at random from $Q$. Then \cite[Thm. 3]{BHB98} states that
$$
\lim_{n\to\infty}\mathbb P(\sqrt{n}\delta_{H}(Q,Q_{n})\leq t)
=\prod_{i=1}^{M}(1-p_{i}(t)),\qquad t> 0,
$$
where for $i\in\{1,\dots,M\}$
\begin{equation}
 \label{eq:bhb-integral}
p_{i}(t)
=\left\{
\begin{array}{cc}
\int_{0}^{\alpha_{i}}h_{i}(t,\alpha)\dd\alpha+\exp\left(-\frac{t^{2}}{2\operatorname{vol}_{2}(Q)}\tan\alpha_{i}\right) 
& 0<\alpha_{i}<\frac{\pi}{2}
\\
\int_{\alpha_{i}-\frac{\pi}{2}}^{\frac{\pi}{2}}h_{i}(t,\alpha)\dd\alpha
& \frac{\pi}{2}\leq \alpha_{i}<\pi,
\end{array}
\right.
\end{equation}
with
$$
h_{i}(t,\alpha)
=\exp\left(-\frac{t^{2}}{2\operatorname{vol}_{2}(Q)}\left(\tan\alpha+\tan(\alpha_{i}-\alpha)\right)\right)
\frac{t^{2}}{2\operatorname{vol}_{2}(Q)}\tan^{2}\alpha.
$$
Note that the limiting distribution is a product of distribution functions, one for each vertex. Below, we use tail bounds to show that the convergence in distribution extends also to convergence in expectation. Let us note that V.E. Brunel \cite{Bru19} provides concentration inequalities for random polytopes in Hausdorff distance, but an additional log-factor appears in the resulting bounds on the expectation.

The only result we could find in the literature on the behavior of $\IE [\delta_H(K,K_n)]$ when $K$ is a polytope  is due to I. B\'ar\'any \cite{Bar89}, who states that it is ``almost trivial" that 
$$
 \IE [\delta_H(K,K_n)]\asymp n^{-1/d},
$$
where $\asymp$ denotes the equivalence of the two expressions up to positive absolute constants. Here, the $n$ points are sampled uniformly inside the polytope. The case of sampling the points uniformly from the boundary is very similar and leads to the following asymptotics;  for completeness a proof will be presented below.

\begin{proposition}\label{pro:barany}
Let $Q\subset\IR^d$ be a convex polytope and, for each $n\in\IN$, let $Q_n$ be the convex hull of $n$ points sampled independently and uniformly from the boundary of $Q$. Then
\[
\IE [\delta_H(Q,Q_n)]\asymp n^{-1/(d-1)}.
\]
\end{proposition}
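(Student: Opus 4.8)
The plan is to establish the two bounds separately, exploiting the fact that the Hausdorff distance between $Q$ and $Q_n$ is governed by how well the random points cover a neighborhood of $\partial Q$, and that locally near the (relative) interior of each facet the boundary is flat. Throughout, write $\delta_H(Q,Q_n)=\max_{x\in\partial Q}\dist(x,Q_n)$, since $Q_n\subseteq Q$ always.

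For the \emph{upper bound} $\IE[\delta_H(Q,Q_n)]\ls n^{-1/(d-1)}$, I would fix a facet $F$ of $Q$ and a threshold $t>0$. If no random point lands in a suitable cap/slab of height comparable to $t$ over a given region of $F$, then that region is at distance $\gs t$ from $Q_n$; more precisely, cover $\partial Q$ by $O(t^{-(d-1)})$ surface pieces of diameter $\asymp t$, and observe that if each piece contains at least one sampled point then $\delta_H(Q,Q_n)\ls t$ by convexity (a point on $\partial Q$ within distance $\asymp t$ of a sampled point is within distance $\ls t$ of $Q_n$). Each piece has normalized surface measure $\asymp t^{d-1}$, so the probability that a fixed piece is missed by all $n$ points is $(1-c\,t^{d-1})^n\le e^{-cn t^{d-1}}$; a union bound over the $O(t^{-(d-1)})$ pieces gives $\IP(\delta_H(Q,Q_n)\gs t)\ls t^{-(d-1)}e^{-cnt^{d-1}}$. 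Integrating this tail via $\IE[\delta_H(Q,Q_n)]=\int_0^\infty \IP(\delta_H(Q,Q_n)\ge t)\,\dd t$, splitting at $t_0\asymp (\log n/n)^{1/(d-1)}$ and using the deterministic bound $\delta_H\le\diam(Q)$ on the exceptional event, yields $\IE[\delta_H(Q,Q_n)]\ls (\log n/n)^{1/(d-1)}$; to remove the logarithm one refines the covering argument slightly (e.g.\ a two-scale net, or directly noting that on a flat facet a missed slab of height $t$ only forces distance $\gs t$ if the \emph{whole} slab is empty, and one can afford $t$ as small as $n^{-1/(d-1)}$ up to constants because the expected number of empty pieces is then $\asymp t^{-(d-1)}\asymp n$, which still integrates correctly once one is careful). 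I expect this logarithmic refinement to be the main technical nuisance, though it is exactly the place where D\"umbgen--Walther \cite{DW96} already did the work, so one may also simply quote their almost-sure bound with $\alpha=1/(d-1)$ and upgrade it to an expectation bound using $\delta_H\le\diam(Q)$.

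For the \emph{lower bound} $\IE[\delta_H(Q,Q_n)]\gs n^{-1/(d-1)}$, it suffices to exhibit one region where $Q_n$ must fall short. Pick a point $x_0$ in the relative interior of a facet $F$ and a small ball $B$ of radius $r\asymp n^{-1/(d-1)}$ in $\partial Q$ around $x_0$, chosen so small that $B$ lies in a single facet and the supporting hyperplane at $x_0$ is the affine hull of $F$. With probability bounded below by a constant, \emph{no} sampled point lies in the surface piece $B$ (since its normalized measure is $\asymp r^{d-1}\asymp 1/n$, so the probability of being empty is $(1-c/n)^n\to e^{-c}>0$). On that event, $x_0$ lies on a supporting hyperplane of $Q$ that touches $Q$ only in $F$ near $x_0$, and all points of $Q_n$ near $x_0$ come from samples outside $B$, hence from lateral directions; a convexity/geometry estimate then shows $\dist(x_0,Q_n)\gs r\asymp n^{-1/(d-1)}$ — essentially because the convex hull of points lying outside a disc of radius $r$ in the flat facet, together with points in the rest of $Q$, cannot reach within $o(r)$ of the center $x_0$ along the facet. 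Therefore $\IE[\delta_H(Q,Q_n)]\ge \IE[\dist(x_0,Q_n)]\gs e^{-c}\,r\gs n^{-1/(d-1)}$.

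Finally I would remark that the argument is genuinely simpler than the polygonal $d=2$ analysis carried out in the rest of the paper, because on the boundary of a polytope the curvature contribution is absent: the order $n^{-1/(d-1)}$ comes purely from the $(d-1)$-dimensional covering of the flat facets, and the facet–ridge–vertex stratification affects only lower-order contributions (a point near a ridge of $Q$ is even closer to $Q_n$, so facets dominate). The case of points sampled uniformly \emph{inside} $Q$, treated by B\'ar\'any \cite{Bar89} and giving exponent $1/d$, is analogous with $(d-1)$-dimensional surface caps replaced by $d$-dimensional caps of volume $\asymp t^{d}$ near the facets.
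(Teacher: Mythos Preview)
Both halves of your argument miss the key structural fact that the paper exploits: for a convex polytope $Q$ and any nonempty convex $K\subset Q$ one has $\delta_H(Q,K)=\max_i \dist(v_i,K)$, the maximum taken over the \emph{vertices} $v_1,\dots,v_M$ of $Q$ (Lemma~\ref{lem:hd-vertex}). Vertices are the hardest boundary points to approximate, not facet interiors; your closing remark that ``a point near a ridge is even closer to $Q_n$, so facets dominate'' has the geometry inverted.

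This breaks your lower bound outright. If $x_0$ lies in the relative interior of a facet $F$ and the disc $B\subset F$ of radius $r$ about $x_0$ contains no sample, it does \emph{not} follow that $\dist(x_0,Q_n)\gtrsim r$: samples on $F$ just outside $B$ can surround $x_0$ within the affine hull of $F$, and then their convex hull contains $x_0$ exactly, giving $\dist(x_0,Q_n)=0$. (Already for $d=2$: two samples at $x_0\pm r e$ on the edge $F$ span a segment through $x_0$.) The paper instead takes $x_0=v_i$ a vertex, where the cone is pointed and an empty ball $B_2^d(v_i,c_i\varrho)$ genuinely forces $\dist(v_i,Q_n)\ge\varrho$; see \eqref{eq:dist-empty}--\eqref{eq:height-2}. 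The same vertex reduction also repairs your upper bound: your covering by $O(t^{-(d-1)})$ pieces yields $\IP(\delta_H>t)\lesssim t^{-(d-1)}e^{-cnt^{d-1}}$, and integrating this honestly produces $(\log n/n)^{1/(d-1)}$ --- the logarithm cannot be removed by a two-scale net or by citing \cite{DW96}, which carries the same factor. With the vertex lemma the union bound is over only $M$ terms, so one gets $\IP(\delta_H(Q,Q_n)>rn^{-1/(d-1)})\le M e^{-cr^{d-1}}$ directly, and integration over $r$ gives the clean $n^{-1/(d-1)}$.
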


I. B\'ar\'any's result gives the dependence on the number of chosen points but nothing is known about the precise asymptotic behavior. 

In this paper we consider a polygon $Q$ in $\mathbb{R}^2$ and determine the exact constants in $\IE[ \delta_H(Q,Q_n)]$ as the number of points chosen on the boundary $\partial Q$ tends to infinity. We will mainly follow the approach of \cite{BHB98}, which is based on Poisson processes, and provide the necessary arguments to achieve convergence in expectation. 

We now present the main results of this paper. We start with the asymptotic behavior of the expected Hausdorff distance of a convex polygon and the random polytope formed by the convex hull of $n$ independent uniform points on the boundary of this polygon as $n\to\infty$.

\begin{theorem} \label{thm:polygon}
Let $Q$ be a convex polygon with $M\ge 3$ vertices, interior angles $\alpha_1,\dots,\alpha_M\in (0,\pi)$ and total edge length  $\left| \partial \, Q \right|$. For each $n\in\IN_{\geq 3}$, let $X_1,\dots,X_n$ be  independent and uniformly distributed points on the boundary of $Q$, and $Q_n=[X_1,\dots,X_n]$ their convex hull. Then
\begin{equation}\label{polygon-1}
\lim_{n\to\infty}n \, \IE[\delta_H(Q,Q_n)]
= \left| \partial \, Q \right| \int_{0}^{\infty}1-\prod_{i=1}^M\Big(1-r\int_{\ell_{\alpha_i}}^{\infty}e^{-r(y+h(y,\alpha_i))}\dd y \Big)\dd r.
\end{equation}
Here,
\begin{equation}\label{polygon-3}
\ell_{\alpha}:=
\begin{cases}
1 &:\,\alpha<\pi/2,\\
\sin(\alpha)^{-1}&:\,\alpha\ge \pi/2,
\end{cases}
\end{equation}
and
\begin{equation}\label{polygon-4}
h(y,\alpha)
:= y\frac{\sin(\alpha)\sqrt{y^2-1}-\cos(\alpha)}{\sin^2(\alpha)y^2-1}
\end{equation}
if
\[
\alpha<\pi/2 \text{ and }y\in [1,\cos(\alpha)^{-1}] \quad\text{or}\quad \alpha\ge \pi/2 \text{ and } y>\sin(\alpha)^{-1}.
\]
If  $\alpha<\pi/2$ and $y>\cos(\alpha)^{-1}$, then we set $h(y,\alpha):=1$.	
\end{theorem}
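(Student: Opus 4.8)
The plan is to follow the Poisson-process strategy of \cite{BHB98}, upgrade their convergence in distribution to convergence in expectation via uniform integrability, and then rewrite the resulting limiting distribution function in the explicit form \eqref{polygon-1}. Write $D_n:=\delta_H(Q,Q_n)$. Since $Q_n\subseteq Q$, the Hausdorff distance equals $\max_{x\in\partial Q}\dist(x,Q_n)$, and the main contribution comes from neighborhoods of the $M$ vertices: away from the vertices the boundary is flat and a standard covering argument (as in \cite{DW96}) shows the distance there is of smaller order. The key scaling is that near a vertex with interior angle $\alpha_i$ the relevant window has size of order $n^{-1}$ along the boundary, so the correct normalization is $nD_n$, and after rescaling the $n$ uniform points near a vertex converge to a planar Poisson process of unit intensity (the factor $|\partial Q|$ enters through the density $1/|\partial Q|$ of a single point on $\partial Q$).

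\medskip

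\noindent\textbf{Step 1: Poissonization and localization.} Replace $n$ by a Poisson number $N_n$ with mean $n$ of uniform points; standard de-Poissonization (the total number concentrates, and $D_n$ is monotone in the point set) transfers the limit back. For each vertex $v_i$, introduce local coordinates and rescale by $n$ so that the two edges emanating from $v_i$ become two fixed rays meeting at angle $\alpha_i$, and the point process becomes a homogeneous Poisson process of intensity $1$ in the plane. Let $R_i$ be the rescaled distance from $v_i$ to the convex hull of the local (rescaled) process, measured appropriately along the angle bisector; then $nD_n\Rightarrow \max_{1\le i\le M}(|\partial Q|\,R_i)$ with the $R_i$ asymptotically independent because the vertex windows are disjoint and the "far" points almost surely do not interfere — this independence is exactly what produces the product over $i$ in \eqref{polygon-1}.

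\medskip

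\noindent\textbf{Step 2: The one-vertex computation.} Fix a vertex with angle $\alpha=\alpha_i$ and compute $\IP(R_i\le r)$, equivalently the probability that the rescaled Poisson process "caps off" the corner within rescaled distance $r$. The event $\{R_i> r\}$ says there is an empty region (a cap near the corner, bounded by a supporting line of the hull of the process at rescaled distance $r$ from the vertex) containing no Poisson points; since the process is Poisson, this probability is $\sum$ over the geometric configurations of $e^{-(\text{area of empty region})}$, which after parametrizing the supporting line by its contact data leads to the integral $r\int_{\ell_\alpha}^{\infty}e^{-r(y+h(y,\alpha))}\dd y$. Here $y$ is a normalized coordinate for where the critical supporting line touches, the lower limit $\ell_\alpha$ from \eqref{polygon-3} reflects that for obtuse angles the geometry forces $y\ge \sin(\alpha)^{-1}$, and $h(y,\alpha)$ from \eqref{polygon-4} is precisely the area of the relevant triangular wedge (with the truncation $h\equiv 1$ handling the regime $y>\cos(\alpha)^{-1}$ where the far edge becomes irrelevant). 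I would carry out this planar area computation directly in coordinates adapted to the two edges; matching it to the formulas in \cite{BHB98}, in particular the functions $p_i$ and $h_i$ in \eqref{eq:bhb-integral}, provides a useful consistency check after the change of variables relating $t$ there to $r$ here.

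\medskip

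\noindent\textbf{Step 3: From distribution to expectation.} Combining Steps 1–2 gives $nD_n\Rightarrow W$ where $\IP(W\le |\partial Q| r)=\prod_{i=1}^M\big(1-r\int_{\ell_{\alpha_i}}^\infty e^{-r(y+h(y,\alpha_i))}\dd y\big)$, so $\IE[W]=\int_0^\infty \IP(W>s)\dd s = |\partial Q|\int_0^\infty\big(1-\prod_i(1-r\int_{\ell_{\alpha_i}}^\infty e^{-r(y+h(y,\alpha_i))}\dd y)\big)\dd r$, which is the right-hand side of \eqref{polygon-1}. It remains to justify $\IE[nD_n]\to\IE[W]$, i.e.\ to prove that $(nD_n)_n$ is uniformly integrable. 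For this I would establish a tail bound $\IP(nD_n\ge s)\le Ce^{-cs}$ uniformly in $n$ for $s$ large: if the Hausdorff distance exceeds $s/n$, then near some vertex there is an empty triangular region of area $\gs s$ (or, for the contribution away from vertices, an empty slab of comparable area), and a union bound over a net of such regions combined with the $(1-\text{area}/|\partial Q|)^n\le e^{-cs}$ estimate gives the claim; the at-most-polynomial size of the net is absorbed by the exponential. Since also $D_n\le \diam(Q)$, this yields uniform integrability and hence the convergence in expectation.

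\medskip

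\noindent\textbf{Main obstacle.} The routine-looking but genuinely delicate part is Step 3, the uniform exponential tail bound: one must control \emph{simultaneously} the behavior near every vertex and along the flat parts of $\partial Q$, with constants independent of $n$, handling the mild correlation between the event "deep cap at a vertex" and the positions of the other points, and making sure the discretization error in passing to a finite net of candidate empty regions does not destroy the exponential decay. The geometry of the acute-versus-obtuse dichotomy (reflected in $\ell_\alpha$) also has to be tracked carefully so that the wedge area is bounded below correctly in all cases. Once this tail bound is in hand, everything else is either a direct planar integral (Step 2) or a standard Poissonization/localization argument (Step 1).
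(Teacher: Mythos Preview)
Your overall architecture --- distributional convergence via a Poisson approximation at the vertices, then an upgrade to convergence in expectation through a uniform exponential tail bound --- is exactly the paper's strategy. The genuine gap is that your Step~2 and Step~3 are set up for \emph{interior} sampling rather than boundary sampling. The points $X_1,\dots,X_n$ lie on $\partial Q$, so after rescaling by $n$ near $v_i$ the limiting object is a unit-rate Poisson process on the \emph{two rays} $\partial C_i$ (with one-dimensional Lebesgue intensity), not ``a homogeneous Poisson process of intensity $1$ in the plane''. The void probability is therefore $e^{-(\text{length cut off on the two rays})}$, not $e^{-(\text{area of a wedge})}$: the exponent $r(y+h(y,\alpha))$ arises because a tangent to $B_2^2(0,r)$ meeting one ray at distance $ry$ from the apex meets the other at distance $r\,h(y,\alpha)$. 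Your assertion that ``$h(y,\alpha)$ is precisely the area of the relevant triangular wedge'' is thus incorrect, and the proposed consistency check against the $p_i,h_i$ of \eqref{eq:bhb-integral} would fail, since those formulas in \cite{BHB98} treat points sampled \emph{inside} $Q$ (with $\sqrt{n}$ scaling and genuine areas).

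Once the one-dimensional picture is in place, the difficulties you flag as the ``main obstacle'' evaporate. First, $\dist(\cdot,Q_n)$ is convex on $Q$, so one has the \emph{exact} identity $\delta_H(Q,Q_n)=\max_{1\le i\le M}\dist(v_i,Q_n)$; there is no separate flat-part contribution to control. Second, the uniform tail bound is a one-line length estimate: if $n\,\dist(v_i,Q_n)>r$ then $B_2^2(v_i,r/n)\cap\{X_1,\dots,X_n\}=\emptyset$, whence $\IP(nD_n>r)\le M\bigl(1-2r/(n|\partial Q|)\bigr)^n\le M e^{-2r/|\partial Q|}$. No nets, no area lower bounds, and no acute/obtuse case distinction are needed here. (The Poissonization/de-Poissonization in your Step~1 is also unnecessary: the paper obtains the joint weak convergence of the rescaled vertex point processes directly from an elementary multinomial-to-Poisson computation and the continuous mapping theorem.)
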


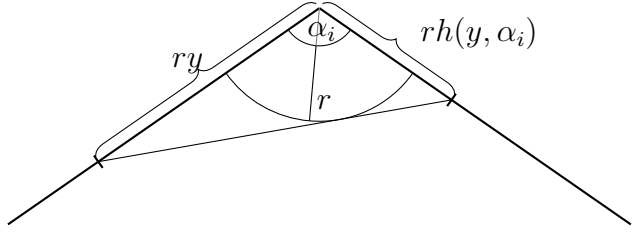
\begin{figure}[ht]
	\begin{center}
		\begin{tikzpicture}[scale=.5,rotate=-35]
			\draw[white,name path=secondedge] (0,0) -- (10,0);
			\draw[thick] (0,0) -- (10,0);
			\draw[white, name path=firstedge] (0,0) -- ({10*cos(250)},{10*sin(250)});
			\draw[thick] (0,0) -- ({10*cos(250)},{10*sin(250)});
			\draw (1,0) arc[start angle=360, end angle=250, radius=1];
			\draw (3,0) arc[start angle=360, end angle=250, radius=3];
			\draw[black!50] ({0*cos(300)},{0*sin(300)}) -- ({1*cos(300)},{1*sin(300)});
			\draw ({1*cos(300)},{1*sin(300)}) -- ({3*cos(300)},{3*sin(300)}) node[near end,xshift=5pt]{$r$};
			\node at (0,0) [xshift=1pt,yshift=-9pt] {$\alpha_i$};

			\begin{scope}
				\clip (0,0)--({10*cos(250)},{10*sin(250)})--(10,0)--(0,0);
				\draw[name path=someline] ({3*cos(315)	+10*cos(225)},{3*sin(315)+10*sin(225)}) -- ({3*cos(315)	-10*cos(225)},{3*sin(315)-10*sin(225)});
				\fill [name intersections={of=someline and firstedge, by=t}] (t) circle (.001);
				\fill [name intersections={of=someline and secondedge, by=s}] (s) circle (.001);
			\end{scope}
			\draw[thick] ($(t)-({.2*cos(340)},{.2*sin(340)})$) --($(t)+({.2*cos(340)},{.2*sin(340)})$);
			\draw[decorate,decoration={brace,amplitude=5pt,raise=2pt,mirror}] (0,0) -- ($(t)$) node [midway,xshift=-8pt,yshift=8pt]{$ry$};
			\draw[decorate,decoration={brace,amplitude=5pt,raise=2pt}] (0,0) -- ($(s)$) node [midway,xshift=35pt,yshift=8pt]{$rh(y,\alpha_{i})$};
			\draw[thick] ($(s)-({.2*cos(90)},{.2*sin(90)})$) --($(s)+({.2*cos(90)},{.2*sin(90)})$);
		\end{tikzpicture}
	\end{center}
	\caption{
	The exponent $r \,h(y,\alpha_i)$ in Theorem~\ref{thm:polygon} is the length  cut off from the boundary $\partial Q$ by a tangent line to a circle with radius $r$  centered at a vertex  that intersects one edge at distance $ry$ from the vertex and the other necessarily at distance $r h(y,\alpha_i)$. For $\alpha_i<\pi/2$ and $y>\cos(\alpha_i)^{-1}$ the picture is different, see  Figure~\ref{fig:small-alpha} below.
	}
\end{figure}
	\tikzsetnextfilename{smallalpha}
	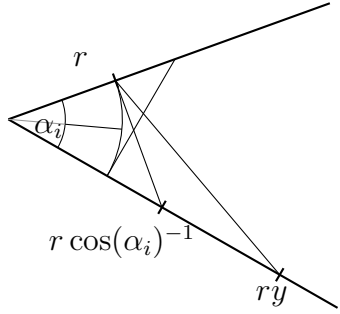
\begin{figure}[h]
		\begin{center}
			\begin{tikzpicture}[scale=.5,rotate=20]
				\def\alph{310}
				\def\line1{360}
				\draw[white,name path=secondedge] (0,0) -- (13,0);
				\draw[thick] (0,0) -- (9,0);
				\draw[white, name path=firstedge] (0,0) -- ({14*cos(\alph)},{14*sin(\alph)});
				\draw[thick] (0,0) -- ({10*cos(\alph)},{10*sin(\alph)});
				\draw (1.5,0) arc[start angle=360, end angle=\alph, radius=1.5];
				\draw (3,0) arc[start angle=360, end angle=\alph, radius=3];
				\draw[color=black!40] (0,0)--({1.5*cos(180+\alph/2)},{1.5*sin(180+\alph/2)});
				\draw ({1.5*cos(180+\alph/2)},{1.5*sin(180+\alph/2)}) -- ({3*cos(180+\alph/2)},{3*sin(180+\alph/2)});
				\node at (0.2,-0.7) [xshift=9pt,yshift=5pt]{$\alpha_{i}$};

				\begin{scope}
					\clip (0,0)--({13*cos(\alph)},{13*sin(\alph)})--(13,0)--(0,0);
					\draw[name path=rcosline] ({3*cos(\line1)	+10*cos(\line1-90)},{3*sin(\line1)+10*sin(\line1-90)}) -- ({3*cos(\line1)	-10*cos(\line1-90)},{3*sin(\line1)-10*sin(\line1-90)});
					\draw[name path=someline] ({3*cos(360)	+10*cos(380-90)},{3*sin(360)+10*sin(380-90)}) -- ({3*cos(360)	-10*cos(380-90)},{3*sin(360)-10*sin(380-90)});
					\draw[name path=rline] ({3*cos(\alph)	+10*cos(\alph+90)},{3*sin(\alph)+10*sin(\alph+90)}) -- ({3*cos(\alph)	-10*cos(\alph+90)},{3*sin(\alph)-10*sin(\alph+90)});
					\fill [name intersections={of=rcosline and firstedge, by=rcos}] (rcos) circle (.001);
					\fill [name intersections={of=someline and firstedge, by=t}] (t) circle (.001);
					\fill [name intersections={of=someline and secondedge, by=s}] (s) circle (.001);
					\fill [name intersections={of=rline and firstedge, by=r0}] (r0) circle (.001);
					\fill [name intersections={of=rline and secondedge, by=rcos2}] (rcos2) circle (.001);
				\end{scope}
				\draw[thick] ($(t)-({.2*cos(\alph+90)},{.2*sin(\alph+90)})$) --($(t)+({.2*cos(\alph+90)},{.2*sin(\alph+90)})$);
				\node at (t)[xshift=-3pt,yshift=-8pt] {$ry$};
				\draw[thick] ($(s)-({.2*cos(90)},{.2*sin(90)})$) --($(s)+({.2*cos(90)},{.2*sin(90)})$);
				\node at (s)[xshift=-13pt,yshift=8pt] {$r$};
				\draw[thick] ($(rcos)-({.2*cos(\alph+90)},{.2*sin(\alph+90)})$) --($(rcos)+({.2*cos(\alph+90)},{.2*sin(\alph+90)})$);
				\node at (rcos)[xshift=-15pt,yshift=-13pt] {$r\cos(\alpha_{i})^{-1}$};
			\end{tikzpicture}
		\end{center}
		\caption{In the case $\alpha_i\in (0,\pi/2)$, tangents to the circle with radius $r$ centered at a vertex which intersect one edge at distance greater than $r\cos(\alpha_i)^{-1}$ from the vertex intersect the other edge at distance $r$.}
		\label{fig:small-alpha}
	\end{figure}

In order to determine the dependence on the interior angles of the polygon, we deduce the following asymptotics from Theorem~\ref{thm:polygon} in the case when all angles are greater than $\frac{\pi}{2}$.

\begin{corollary}\label{cor:asymptotics}
If $Q$ and $Q_n$ are as in Theorem~\ref{thm:polygon} (i.e., $M\geq 3$) and $\alpha_1,\dots,\alpha_M\ge \pi/2$, we have
\begin{equation*}\label{polygon-5}
\frac{1}{5} A(Q) 
\le \lim_{n\to\infty}n \, \IE[\delta_H(Q,Q_n)]
\le A(Q),
\end{equation*}
where 
\[
A(Q)
:= \left| \partial \, Q \right|\sum_{k=1}^{M}(-1)^{k+1}\sum_{1\leq i_{1}<\cdots< i_{k}\leq M} \frac{1}{\frac{1}{\sin\alpha_{i_{1}}}+\cdots+\frac{1}{\sin\alpha_{i_{k}}}}.
\]
\end{corollary}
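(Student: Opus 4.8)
The plan is to start from the exact identity of Theorem~\ref{thm:polygon}. Since $\alpha_i\ge\pi/2$ for every $i$, we have $\ell_{\alpha_i}=1/\sin\alpha_i$ and $h(\cdot,\alpha_i)$ is given by \eqref{polygon-4} on all of $(1/\sin\alpha_i,\infty)$, so the limit in question equals $\left|\partial Q\right|\,I$ with
\[
I:=\int_0^\infty\Big(1-\prod_{i=1}^M\bigl(1-p_i(r)\bigr)\Big)\dd r,\qquad
p_i(r):=r\int_{1/\sin\alpha_i}^{\infty}e^{-r(y+h(y,\alpha_i))}\dd y ,
\]
and it suffices to sandwich the deterministic quantity $I$. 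Two elementary ingredients serve both directions: the map $(x_1,\dots,x_M)\mapsto 1-\prod_i(1-x_i)$ is nondecreasing in each coordinate on $[0,1]^M$; and, for any $c>0$, expanding $1-\prod_i(1-e^{-cr/\sin\alpha_i})$ by inclusion--exclusion and integrating termwise (a finite sum) gives
\[
\int_0^\infty\Big(1-\prod_{i=1}^M\bigl(1-e^{-cr/\sin\alpha_i}\bigr)\Big)\dd r
=\sum_{k=1}^{M}(-1)^{k+1}\!\!\!\sum_{1\le i_1<\cdots<i_k\le M}\!\!\frac{1}{c\bigl(\frac{1}{\sin\alpha_{i_1}}+\cdots+\frac{1}{\sin\alpha_{i_k}}\bigr)}=\frac{A(Q)}{c\left|\partial Q\right|} .
\]
The upper bound is then immediate: discarding $h\ge 0$ gives $p_i(r)\le r\int_{1/\sin\alpha_i}^{\infty}e^{-ry}\dd y=e^{-r/\sin\alpha_i}\in[0,1]$, so by monotonicity $1-\prod_i(1-p_i(r))\le 1-\prod_i(1-e^{-r/\sin\alpha_i})$ pointwise, and integrating (the identity with $c=1$) yields $\left|\partial Q\right| I\le A(Q)$.

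For the lower bound one cannot bound $p_i(r)$ from below on the whole ray, since $h(y,\alpha_i)\to\infty$ as $y\downarrow 1/\sin\alpha_i$; instead I would localise near the ``balanced'' tangent $y=h$. Writing a tangent line to the circle of radius $r$ about a vertex in terms of the angle $\psi$ of its outer normal, one checks (elementary geometry, consistent with \eqref{polygon-4}) that $y=1/\cos\psi$, $h(y,\alpha_i)=1/\cos(\psi-\alpha_i)$ for $\psi\in[\alpha_i-\tfrac{\pi}{2},\tfrac{\pi}{2})$, and $\dd y=\sin\psi\,\cos^{-2}\psi\,\dd\psi$; in this form the symmetry $y\leftrightarrow h$ is manifest, the diagonal $y=h$ is attained at $\psi=\alpha_i/2$ with $y=h=\sec(\alpha_i/2)$, and $h$ is decreasing in $y$. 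Since $\alpha_i\ge\pi/3$ one has $\sec(\alpha_i/2)\ge 1/\sin\alpha_i$, and for $y\ge\sec(\alpha_i/2)$ one has $h(y,\alpha_i)\le\sec(\alpha_i/2)$; hence
\[
p_i(r)\ \ge\ r\int_{\sec(\alpha_i/2)}^{\infty}e^{-r(y+\sec(\alpha_i/2))}\dd y\ =\ e^{-2r\sec(\alpha_i/2)} .
\]
Since $2\sec(\alpha_i/2)=\frac{4\sin(\alpha_i/2)}{\sin\alpha_i}\le\frac{5}{\sin\alpha_i}$ (because $\alpha_i/2\in[\tfrac{\pi}{4},\tfrac{\pi}{2})$), we get $p_i(r)\ge e^{-5r/\sin\alpha_i}$, and monotonicity together with the identity for $c=5$ gives $\left|\partial Q\right| I\ge\tfrac{1}{5}A(Q)$, completing the proof.

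The one genuinely delicate point is this localisation and the accompanying inequality $h(y,\alpha_i)\le\sec(\alpha_i/2)$ on $y\ge\sec(\alpha_i/2)$ (together with $\sec(\alpha_i/2)\ge 1/\sin\alpha_i$): a naive lower bound fails because $h$ blows up at the lower endpoint of integration, so one must retain precisely the part of the integral around the balanced tangent; the required monotonicity and symmetry are transparent in the normal-angle parametrisation but can also be read off \eqref{polygon-4} directly, and everything else is routine inclusion--exclusion bookkeeping. Finally, the constants $\tfrac{1}{5}$ and $1$ are not sharp: using $y,h\ge 1/\sin\alpha_i$ in the upper estimate and $2\sec(\alpha_i/2)\le 4/\sin\alpha_i$ in the lower one, the same argument improves the conclusion to $\tfrac{1}{4}A(Q)\le\lim_{n\to\infty}n\,\IE[\delta_H(Q,Q_n)]\le\tfrac{1}{2}A(Q)$.
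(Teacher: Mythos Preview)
Your proof is correct and structurally the same as the paper's: both start from Theorem~\ref{thm:polygon}, obtain the upper bound by discarding $h\ge 0$ so that $p_i(r)\le e^{-r/\sin\alpha_i}$, and obtain the lower bound by truncating the $y$-integral above a threshold on which $h$ can be bounded from above, then applying inclusion--exclusion with a constant $c$ in both directions. The difference lies only in the execution of the lower bound. The paper truncates at $y\ge 2/\sin\alpha_i$ and shows algebraically from \eqref{polygon-4} that $h(y,\alpha_i)\le 3/\sin\alpha_i$ there, which yields exactly $p_i(r)\ge e^{-5r/\sin\alpha_i}$. You instead pass to the normal-angle parametrisation $y=\sec\psi$, $h=\sec(\psi-\alpha_i)$, making the symmetry $y\leftrightarrow h$ and the monotonicity of $h$ in $y$ transparent, and truncate at the balanced point $y\ge\sec(\alpha_i/2)$; this gives the cleaner bound $p_i(r)\ge e^{-2r\sec(\alpha_i/2)}$ and, via $2\sec(\alpha_i/2)=4\sin(\alpha_i/2)/\sin\alpha_i\le 4/\sin\alpha_i$, the sharper constant $\tfrac14$ rather than $\tfrac15$. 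Your additional observation that $h(y,\alpha_i)\ge 1/\sin\alpha_i$ (equivalently $\cos(\psi-\alpha_i)\le\sin\alpha_i$ on the relevant range) tightens the upper bound to $\tfrac12 A(Q)$ as well. So the approaches coincide in spirit; your geometric parametrisation buys a more conceptual argument and better constants, while the paper's purely algebraic estimate is self-contained from \eqref{polygon-4} without any reparametrisation.
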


In the case when all angles are equal, $Q$ is a regular polygon and we obtain the following asymptotics.

\begin{corollary}\label{cor:mgon}
Let $Q_M^{\rm reg}$ be a regular polygon with $M\ge 4$ vertices and $Q_{M,n}$ the convex hull of $n$ points independently and uniformly sampled from the boundary of $Q_M^{\rm reg}$. Then
\[
\lim_{n\to\infty}n \, \IE[\delta_H(Q_M^{\rm reg},Q_{M,n})]\asymp \left |\partial Q_M^{\rm reg}\right| \, \frac{\log M}{M}. 
\]
\end{corollary}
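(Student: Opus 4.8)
The plan is to deduce Corollary~\ref{cor:mgon} from Corollary~\ref{cor:asymptotics} by evaluating the constant $A(Q)$ in closed form for a regular polygon. Every interior angle of a regular $M$-gon equals $\pi-\tfrac{2\pi}{M}$, which lies in $[\tfrac{\pi}{2},\pi)$ precisely when $M\ge 4$; hence Corollary~\ref{cor:asymptotics} applies and gives
\[
\tfrac15\,A(Q_M^{\rm reg})\le \lim_{n\to\infty}n\,\IE[\delta_H(Q_M^{\rm reg},Q_{M,n})]\le A(Q_M^{\rm reg}).
\]
So it suffices to show that $A(Q_M^{\rm reg})\asymp |\partial Q_M^{\rm reg}|\,\tfrac{\log M}{M}$ with absolute (i.e.\ $M$-independent) implied constants.

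First I would simplify $A$. Writing $s_M:=\sin(2\pi/M)$ for the common value $\sin\alpha_i$, all $\binom Mk$ subsets of size $k$ contribute the same term $s_M/k$ to the inner sum, so
\[
A(Q_M^{\rm reg})=|\partial Q_M^{\rm reg}|\,s_M\sum_{k=1}^M(-1)^{k+1}\binom Mk\frac1k .
\]
Next I would use the classical identity $\sum_{k=1}^M(-1)^{k+1}\binom Mk\frac1k=H_M$, where $H_M:=\sum_{j=1}^M\frac1j$ is the $M$-th harmonic number; this follows by integrating $\sum_{k=1}^M(-1)^{k+1}\binom Mk t^{k-1}=\frac{1-(1-t)^M}{t}$ over $t\in(0,1)$ and substituting $u=1-t$. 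This yields the exact formula $A(Q_M^{\rm reg})=|\partial Q_M^{\rm reg}|\,\sin(2\pi/M)\,H_M$.

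Finally I would combine this with the estimates $\sin(2\pi/M)\asymp\frac1M$ for $M\ge 4$ (e.g.\ from $\frac2\pi x\le\sin x\le x$ on $[0,\frac\pi2]$, using $2\pi/M\le\frac\pi2$) and $H_M\asymp\log M$ for $M\ge 2$, to conclude $A(Q_M^{\rm reg})\asymp|\partial Q_M^{\rm reg}|\,\frac{\log M}{M}$ with absolute constants, and hence the corollary via the sandwich above. The argument is essentially a computation; the only points needing care are that the equivalence constants are genuinely independent of $M$ — which is why one keeps the exact form $\sin(2\pi/M)\,H_M$ rather than passing to asymptotics too early — and that the hypothesis $M\ge 4$ is exactly what makes all interior angles at least $\tfrac\pi2$, so that the two-sided estimate of Corollary~\ref{cor:asymptotics} is available in the first place.
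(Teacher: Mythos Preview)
Your proposal is correct and follows essentially the same route as the paper: check that $M\ge 4$ forces all interior angles to be at least $\pi/2$, apply Corollary~\ref{cor:asymptotics}, collapse $A(Q_M^{\rm reg})$ to $|\partial Q_M^{\rm reg}|\sin(2\pi/M)\sum_{k=1}^M(-1)^{k+1}\binom{M}{k}\frac1k$, identify the alternating sum as $H_M$, and use $\sin(2\pi/M)\asymp 1/M$ and $H_M\asymp\log M$. The only cosmetic difference is that you supply the short integral proof of the harmonic-number identity while the paper cites it from a table.
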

\begin{proof}
Let $\beta$ be the exterior angle of $Q_M^{\rm reg}$. Then $\beta=\frac{2 \pi}{M}$ and thus $\alpha= \pi-\beta = \pi(1-\frac{2}{M}) \geq \frac{\pi}{2}$, as $M\geq 4$.  Hence  we 
can apply Corollary~\ref{cor:asymptotics}.
 Corollary~\ref{cor:asymptotics} yields for a regular $M$-gon that $\lim_{n\to\infty}n \, \IE[\delta_H(Q,Q_n)]$ is up to absolute constants equal to
\begin{eqnarray*}
\sum_{k=1}^{M}(-1)^{k+1}\sum_{1\leq i_{1},\cdots i_{k}\leq M}
\frac{1}{\frac{1}{\sin\alpha_{i_{1}}}+\cdots+\frac{1}{\sin\alpha_{i_{k}}}}
=\sum_{k=1}^{M}(-1)^{k+1}{M\choose k}\frac{\sin\alpha_{M}}{k},
\end{eqnarray*}
where
\[
\sin(\alpha_M)=\sin(\pi(1-2/M))\asymp \frac{2\pi}{M}.
\]
By formula \cite[0.155/4]{GR07}, it holds that
\[
\sum_{k=1}^{M}\binom{M}{k}(-1)^{k+1}\frac{1}{k}
= \sum_{k=1}^{M}\frac{1}{k}
\asymp \log M,
\]
which completes the proof.
\end{proof}

We conclude this section with  a remark on the log-factor in Corollary~\ref{cor:mgon} and a remark on higher moments.

\begin{remark}
	The logarithm appearing in Corollary~\ref{cor:mgon} can be heuristically derived from the asymptotic independence of the random variables
\[
Z_i:=n\, \dist(v_i,K_n),\quad i\in\{1,\dots,M\},
\]
which is crucial in the proof of Theorem~\ref{thm:polygon} below. Assuming that these random variables are in fact independent, we can use Lemma~\ref{lem:hd-vertex} below to write
\[
n\,\IE[ \delta_H(Q_M^{\rm reg},Q_{M,n})]
=\IE \big[\max_{1\le i\le M} Z_i\big]
\]
as a maximum of independent random variables. Bounds appearing in the proof of Corollary~\ref{cor:asymptotics} allow us to conclude that the tails $\IP(M\cdot Z_i\ge r)$ decay exponentially in $r$. This yields that $M\IE\big( \max_{1\le i\le M}Z_i\big)$ is of order $\log M$, which is the statement of Corollary~\ref{cor:mgon}. To give an idea for this implication, note that by a substitution we have
\begin{align*}
M\, \IE \Big[\max_{1\le i\le M}Z_i\Big]
& =\int_0^{\infty}\IP\Big(\max_{1\le i\le M}Z_i\ge r/M\Big)\dd r \cr
& =(\log M)\int_0^{\infty}1-\prod_{i=1}^M\big(1-\IP(M\cdot Z_i\ge x\log M)\big)\dd x,
\end{align*}
and we can insert the tail bounds to derive the stated asymptotics.
\end{remark}
\vskip 3mm
\begin{remark}
	We can also establish asymptotics for arbitrary moments. Let  $\gamma>0$ be a real  number.  Modifying the first steps of the proof of Theorem \ref{thm:polygon} and using that 
	\[
	\IE[\delta_H(Q,Q_n)^{\gamma}]
	=\gamma \int_0^{\infty}r^{\gamma-1}\IP(\delta_H(Q,Q_n)\ge r)\dd r
	\]
	will give, with the notation of Theorem~\ref{thm:polygon}, that
	\[
	\lim_{n\to\infty}n^{\gamma}\IE[\delta_H(Q,Q_n)^{\gamma}]
	= \gamma \,  \left|\partial \, Q\right|^{\gamma}\int_{0}^{\infty}r^{\gamma-1}\Bigg(1-\prod_{i=1}^M\Big(1-r\int_{\ell_{\alpha_i}}^{\infty}e^{-r(y+h(y,\alpha_i))}\dd y\Big)\Bigg)\dd r.
	\]
\end{remark}

\vskip 4mm

\noindent
We conclude this section by introducing  notation and relevant notions. 
We then present the proofs of our results in Section \ref{Beweise}.

\medskip

\textbf{Notation. } The Euclidean ball with center $x\in\IR^d$ and radius $\rho>0$ is denoted by $B_{2}^{d}(x,\rho)$. We write  in short  $B_{2}^{d}:=B_{2}^{d}(0,1)$ for the unit ball.  For a set $A$,  its $d$-dimensional Hausdorff measure or volume is  $\vol_d(A)$ or $|A|$, its diameter is  $\diam(A)$.  The distance of the set $A$ to another set $B$ is 
$\dist(A,B) = \inf\{\|x-y\|:  x \in A, y \in B\}$. In particular, $\dist(x,K)=\inf_{y\in K}\|x-y\|$. 
\par
The symmetric difference metric between two convex bodies $C$ and $K$, i.e., compact, convex subsets of $\mathbb R^{d}$ with nonempty interior, is
$$
d_{S}(C,K)=\operatorname{vol}_{d}(C\setminus K)+\operatorname{vol}_{d}(K\setminus C)
$$
and the Hausdorff metric between two convex bodies $C$ and $K$ is
$$
\delta_{H}(C,K)=\inf\{\rho\ge 0\colon  C\subseteq K+B_{2}^{d}(0,\rho)\hskip 1mm\mbox{and}\hskip 1mm K\subseteq C+B_{2}^{d}(0,\rho)\},
$$
where $ C+K=\{x+y\colon x\in C \hskip 1mm\mbox{and}\hskip 1mm y\in K\} $ is their Minkowski sum. The convex hull of points $x_{1},\dots,x_{n}\in\mathbb R^{d}$ is a convex polytope denoted by $[x_{1},\dots,x_{n}]$. 


\section{Proofs}\label{Beweise}

We first give the proof of Theorem~\ref{thm:polygon}, deduce Corollary~\ref{cor:asymptotics}, and then prove Propositions~\ref{pro:smooth} and~\ref{pro:barany}.
To do so, we need additional  lemmas.

An important observation is  the fact that for a convex polytope the Hausdorff distance to a convex subset is attained at the vertices and asymptotically, as $n\to\infty$, ``everything happens at the vertices''. 

\begin{lemma} \label{lem:hd-vertex}
	Let $d\ge 2$. Let $M \in \mathbb{N}$ and let  $Q\subset\IR^d$ be a convex polytope with vertices $v_{1},\dots,v_M$. If  $K\subset Q$ is a convex,  nonempty subset, then 
	\[
	\delta_H(Q,K)
	=\max_{1\le i\le M} \dist(v_i,K).
	\]
\end{lemma}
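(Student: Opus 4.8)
The plan is to reduce the Hausdorff distance to a one-sided supremum and then exploit convexity. Since $K\subseteq Q$, the inclusion $K\subseteq Q+B_2^d(0,\rho)$ holds already for $\rho=0$, so the defining infimum for $\delta_H(Q,K)$ is governed only by the requirement $Q\subseteq K+B_2^d(0,\rho)$. Unwinding this, $Q\subseteq K+B_2^d(0,\rho)$ is equivalent to $\dist(x,K)\le\rho$ for every $x\in Q$, and hence
\[
\delta_H(Q,K)=\sup_{x\in Q}\dist(x,K).
\]
Because $Q$ is compact and $x\mapsto\dist(x,K)$ is continuous (indeed $1$-Lipschitz), the supremum is attained, so $\delta_H(Q,K)=\max_{x\in Q}\dist(x,K)$.

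Next I would invoke the convexity of the distance function: since $K$ is convex and nonempty, $f(x):=\dist(x,K)$ is a convex function on $\IR^d$. (If $y_0,y_1\in K$ are near-optimal for $x_0,x_1$, then $(1-\lambda)y_0+\lambda y_1\in K$ witnesses $f((1-\lambda)x_0+\lambda x_1)\le (1-\lambda)f(x_0)+\lambda f(x_1)$ up to an arbitrarily small error.) The final ingredient is the elementary maximum principle: a convex function on a polytope $Q=[v_1,\dots,v_M]$ attains its maximum at a vertex, because every $x\in Q$ is a convex combination $x=\sum_i\lambda_i v_i$ and $f(x)\le\sum_i\lambda_i f(v_i)\le\max_i f(v_i)$. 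Combining the three steps gives
\[
\delta_H(Q,K)=\max_{x\in Q}\dist(x,K)=\max_{1\le i\le M}\dist(v_i,K),
\]
and the reverse inequality $\max_i\dist(v_i,K)\le\max_{x\in Q}\dist(x,K)$ is trivial since each $v_i\in Q$.

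There is no serious obstacle here; the only points requiring a line of care are the translation between the Minkowski-sum definition of $\delta_H$ and the supremum-of-distances formula (together with attainment of the supremum by compactness of $Q$), and the verification that $\dist(\cdot,K)$ is genuinely convex rather than merely quasiconvex. Both are standard, so the proof is short.
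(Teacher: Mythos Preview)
Your proof is correct and follows exactly the approach indicated in the paper, which does not give a detailed argument but merely cites \cite{Bru19} and remarks that the result ``follows from the fact that the convex continuous function $\dist(\cdot,K)$ attains its maximum on $Q$ at a vertex.'' You have simply supplied the standard details of that sketch: the reduction of $\delta_H$ to $\sup_{x\in Q}\dist(x,K)$ via $K\subset Q$, continuity and compactness for attainment, convexity of $\dist(\cdot,K)$, and the vertex-maximum principle for convex functions on polytopes.
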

This lemma is known, see, e.g., \cite[Lem. 2]{Bru19}, and follows from the fact that the convex continuous function $\dist(\cdot,K)$ attains its maximum on $Q$ at a vertex. 

The following lemma will be useful for geometric computations.
\begin{lemma}\label{lem:triangle}
Let $D$ be a triangle in $\mathbb R^{2}$ with sides of lengths $a,b,c$ and  angle $\alpha\in (0,\pi)$ between the sides $b$ and $c$. Then the height $h$ with respect to the side $a$ equals
\begin{equation}\label{eq:triangle1}
h=\frac{bc\sin\alpha}{\sqrt{b^2+c^2-2bc\cos\alpha}}.
\end{equation}
\end{lemma}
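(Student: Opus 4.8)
The plan is to compute the area of the triangle $D$ in two different ways and then compare. First I would express the area using the angle $\alpha$ between the sides of lengths $b$ and $c$: dropping the perpendicular from the common vertex of these two sides onto the side $b$ (or using the standard formula), the area of $D$ equals $\tfrac12 bc\sin\alpha$. This is the elementary ``side–angle–side'' area formula and needs no further justification beyond noting that $\sin\alpha>0$ for $\alpha\in(0,\pi)$.

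Next I would express the same area using the side $a$ as base: by definition of the height $h$ with respect to $a$, the area of $D$ equals $\tfrac12\,a\,h$. Equating the two expressions gives $a\,h = bc\sin\alpha$, hence $h = \dfrac{bc\sin\alpha}{a}$.

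Finally I would eliminate $a$ via the law of cosines applied to the angle $\alpha$ opposite the side $a$, namely $a^{2} = b^{2}+c^{2}-2bc\cos\alpha$, so that $a = \sqrt{b^{2}+c^{2}-2bc\cos\alpha}$ (the right-hand side is positive by the triangle inequality). Substituting this into the expression for $h$ yields \eqref{eq:triangle1}. There is essentially no obstacle here; the only point worth a word is that all quantities under the square root and in the denominator are strictly positive, so the formula is well defined, and that the particular labeling convention (the angle $\alpha$ lies between the sides $b$ and $c$, and $a$ is the remaining side) is exactly what makes both the area formula and the law of cosines apply in the stated form.
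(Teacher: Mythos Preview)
Your argument is correct and coincides with the paper's own proof: equate the two area expressions $\tfrac12 a h=\tfrac12 bc\sin\alpha$ and then use the law of cosines to replace $a$. The additional remarks on positivity are fine but not strictly needed.
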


\begin{proof}
Formula \eqref{eq:triangle1} follows from elementary trigonometry.  The  area $|D|$ of $D$ equals
\[
\frac{1}{2}a\cdot h=
|D|=\frac{1}{2}bc\sin\alpha.
\]
Using the law of cosine, $ a^{2}=b^{2}+c^{2}-2bc\cos\alpha$, we obtain
\[
h=\frac{bc}{a}\sin\alpha=\frac{bc\sin\alpha}{\sqrt{b^{2}+c^{2}-2bc\cos\alpha}},
\]
which proves the desired formula.
\end{proof}

As a consequence of Lemma~\ref{lem:triangle} we deduce the function $h$ in Theorem~\ref{thm:polygon}. 

\begin{lemma} \label{lem:tangent}
Let $C$ be the cone in $\mathbb R^{2}$ spanned by vectors $u,v\in\IR^2$ of unit length and enclosing an angle $\alpha\in (0,\pi)$. For any $r>0$, the tangent line to $B_2^2(0,r)\cap C$ which intersects one edge in $tu$, where $t\ge r$, passes through $r h(t/r,\alpha)v$, where $h$ is as in Theorem~\ref{thm:polygon}.
\end{lemma}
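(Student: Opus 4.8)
The plan is to reduce Lemma~\ref{lem:tangent} to a direct application of Lemma~\ref{lem:triangle}. Place the apex of the cone $C$ at the origin, so that $u$ and $v$ are the unit direction vectors of its two edges and the angle between them is $\alpha$. A tangent line to $B_2^2(0,r)\cap C$ that touches the circular arc meets the first edge at the point $tu$ with $t\ge r$; I must show it meets the second edge at $sv$ with $s = r\,h(t/r,\alpha)$. By scaling, it suffices to treat $r=1$: replacing $r$ by $1$ amounts to dividing all lengths by $r$, which sends $t$ to $t/r$ and $s$ to $s/r$, and the claim becomes $s/r = h(t/r,\alpha)$.

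So set $r=1$. Consider the triangle $D$ with vertices $0$, $tu$, and $sv$. Its two sides emanating from the origin have lengths $b=t$ and $c=s$, and the included angle is $\alpha$; the third side lies along the tangent line. The key geometric fact is that the distance from the apex $0$ to the tangent line equals $1$, because the tangent line touches the unit circle centered at $0$ (this is exactly where the hypothesis $t\ge r=1$, i.e.\ $tu$ lies outside or on the disk, guarantees the foot of the perpendicular from $0$ to the chord lands on the actual tangent segment). That distance from $0$ to the opposite side is precisely the height $h$ of $D$ with respect to side $a$ in the notation of Lemma~\ref{lem:triangle}. Hence Lemma~\ref{lem:triangle} gives
\[
1 = \frac{bc\sin\alpha}{\sqrt{b^2+c^2-2bc\cos\alpha}} = \frac{ts\sin\alpha}{\sqrt{t^2+s^2-2ts\cos\alpha}}.
\]
Squaring yields $t^2s^2\sin^2\alpha = t^2 + s^2 - 2ts\cos\alpha$, a quadratic in $s$:
\[
(\sin^2\alpha\, t^2 - 1)\,s^2 + 2t\cos\alpha\, s - t^2 = 0.
\]
Solving for $s$ and keeping the geometrically correct root gives, after simplification, $s = t\dfrac{\sin\alpha\sqrt{t^2-1}-\cos\alpha}{\sin^2\alpha\, t^2 - 1}$, which is exactly $h(t,\alpha)$ from \eqref{polygon-4} (with the case distinction $\sin^2\alpha\,t^2-1$ positive or negative handled by the sign of the root). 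Re-inserting the scaling, $s = r\,h(t/r,\alpha)$, as claimed.

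I would then address the two remaining points carefully. First, the choice of root in the quadratic: when $\sin^2\alpha\, t^2 -1 > 0$ the leading coefficient is positive, the product of roots $-t^2/(\sin^2\alpha\,t^2-1)$ is negative, so exactly one root is positive and it is the one with the $+\sqrt{\,\cdot\,}$; when $\sin^2\alpha\, t^2-1<0$ one checks similarly that the displayed formula is the one corresponding to the tangent segment that actually lies inside the cone (the other algebraic solution corresponds to the second tangent line from $tu$, which is the line through $0$ along $u$ extended, or to a tangent touching the opposite part of the circle). I expect this sign/branch bookkeeping to be the only delicate part; everything else is the elementary trigonometric identity from Lemma~\ref{lem:triangle}. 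Finally, I would remark that the case $\alpha<\pi/2$ with $t/r>\cos(\alpha)^{-1}$, where Theorem~\ref{thm:polygon} instead sets $h(y,\alpha):=1$, corresponds geometrically to the situation in Figure~\ref{fig:small-alpha}: the perpendicular from $0$ to the relevant tangent hits the second edge at distance exactly $r$ from the apex rather than further out, so the ``cut-off'' length stabilizes at $r$; this is consistent with, but slightly outside the scope of, the statement of Lemma~\ref{lem:tangent}, which only asserts the formula for the tangent meeting the first edge at $tu$ with $t\ge r$ and produces $rh(t/r,\alpha)$ with $h$ as in \eqref{polygon-4} in the remaining ranges of $(y,\alpha)$.
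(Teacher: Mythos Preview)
Your approach is essentially identical to the paper's: both set up the triangle with vertices $0$, $tu$, $sv$, invoke Lemma~\ref{lem:triangle} to equate the height with $r$, and solve the resulting equation for $s$. The paper does not rescale to $r=1$ and simply writes ``solving for $s(t)$'' where you spell out the quadratic and discuss root selection, but this is cosmetic.

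One correction: the case $\alpha<\pi/2$, $t/r>\cos(\alpha)^{-1}$ is \emph{not} outside the scope of Lemma~\ref{lem:tangent}. The lemma says ``$h$ is as in Theorem~\ref{thm:polygon}'', and Theorem~\ref{thm:polygon} explicitly sets $h(y,\alpha):=1$ in that range, so this case must be proved. The paper's proof handles it in one line: when $t\ge r\cos(\alpha)^{-1}$, the supporting line of the convex set $B_2^2(0,r)\cap C$ through $tu$ no longer touches the interior of the circular arc but instead passes through the corner point $rv$, so $s(t)=r$. Your geometric description is close to this (though it is the tangent line itself, not the perpendicular from $0$, that meets the second edge at distance $r$), so you have the right idea; you should simply fold it into the proof proper rather than set it aside as a remark.
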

\begin{proof}
	Let $F_u=\{tu\colon t\ge 0\}$ and $F_v=\{tv\colon t\ge 0\}$ and let $T$ be the tangent  to $B_2^2(0,r)\cap C$ which intersects $F_u$ in the point $tu$, $t>0$. Let $s(t)v$ be the point  where $T$ intersects $F_v$, $s(t)>0$.

	In the case $\pi/2\le \alpha<\pi$,  Lemma~\ref{lem:triangle} implies that the triangle formed by the origin, $tu$ and $s(t)v$ has height 
	\[
	r=\frac{ts(t)\sin\alpha}{\sqrt{t^2+s(t)^2-2ts(t)\cos\alpha}}.
	\]
	Solving  for $s(t)$, we get
	\begin{equation} \label{eq:s-def}
	s(t)=r\, t \frac{\sin\alpha \sqrt{t^2-r^2}-r\cos\alpha}{t^2\sin^2\alpha-r^2}
	=r h(t/r,\alpha),
	\end{equation}

	Let now $0<\alpha<\pi/2$. If $t < r\cos(\alpha)^{-1}$, then, again by Lemma~\ref{lem:triangle}, $s(t)$ is as in \eqref{eq:s-def}. If $t\ge r\cos(\alpha)^{-1}$, then $s(t)=r=rh(t/r,\alpha)$. 
\end{proof}

Now we can prove Theorem~\ref{thm:polygon}.

\begin{proof}[Proof of Theorem~\ref{thm:polygon}]

After a possible rescaling we can assume that $|\partial Q|=1$. We adapt the proof of \cite[Thm. 3]{BHB98} in order to first prove the distributional convergence
\begin{equation} \label{eq:dist-conv}
	\lim_{n\to\infty}\IP(n \delta_H(Q,Q_n)\le r)
= \prod_{i=1}^M (1-q_i(r)), \quad r>0,
\end{equation}
where
\[
q_i\colon \IR_+\to \IR_+,\quad
r\mapsto r\int_{\ell_{\alpha_i}}^{\infty}e^{-r(y+h(y,\alpha_i))}\dd y, 
\]
with $\ell_{\alpha}$ and $h(y,\alpha)$  as in Theorem~\ref{thm:polygon}. Recall that $Q_n$ is the convex hull of $n$ independently and uniformly distributed points $X_1,\dots,X_n$ on $\partial Q$.

First, we show how \eqref{eq:dist-conv} implies the convergence in expectation which is claimed in Theorem~\ref{thm:polygon}. We have the tail bound (see also the proof of (\ref{Prop1}) of  Proposition~\ref{pro:barany})
\begin{align*}
\IP(n\delta_H(Q,Q_n)> r)
&\le \sum_{i=1}^{M}\IP(n\dist(v_i,Q_n)> r)\\
&\le \sum_{i=1}^{M}\IP(\{X_1,\dots,X_n\}\cap B_2^2(v_i,r/n)=\emptyset)\\
&\le M e^{-c r}, \quad n\in\IN,
\end{align*}
where $c=2/|\partial Q|$.  Thus, for $p>1$,
\[
\sup_{n\in\IN}\IE [n\delta_H(Q,Q_n)]^p  
=\sup_{n\in\IN}\int_0^{\infty} \IP(n\delta_H(Q,Q_n)\ge r^{1/p})\dd r
\le M\int_0^{\infty} e^{-cr^{1/p}}\dd r
<\infty,
\]
i.e., the sequence $(n\delta_H(Q,Q_n))_{n\in\IN}$ is uniformly integrable and 
\[
\lim_{n\to\infty}n\IE[\delta_H(Q,Q_n)]
= \int_0^{\infty} 1-\prod_{i=1}^M (1-q_i(r))\dd r.
\]
This extends to arbitrary moments.

We show \eqref{eq:dist-conv}, following \cite{BHB98}. We refer to \cite{LP18} for more information on (Poisson) point processes. We begin with some preparation. Let
\[
L=\frac{\min_{1\le i\le M} \|v_i-v_{i+1}\|_2}{2},
\]
where we set $v_{M+1}=v_1$. Then the ball $B_2^2(v_i,L), 1\le i\le M,$ only intersects the edges of $Q$ which are incident to $v_i$.

For $1\le i\le M$, define the point process $\xi_{n,i}=\sum_{j=1}^{n}\delta_{n(X_j-v_i)\cap B_2^2(0,Ln) }$ regarded as a random element in the space $\mathcal{N}_i$ of locally finite counting measures on the boundary of a cone $C_i$ with apex at the origin and angle $\alpha_i$. When $1\le i\le M$, we can write $C_i$ as the convex hull of two rays spanned by unit vectors $a_i,b_i\in \IR^2$. We endow $\mathcal{N}_i$, $1\le i\le M$, with the vague topology and Borel $\sigma$-field.

As $n\to\infty$, we have 
\begin{equation} \label{eq:poisson}
(\xi_{n,1},\dots,\xi_{n,M})
\xrightarrow[n\to\infty]{\rm d} (\xi_{1},\dots,\xi_{M}),
\end{equation}
where $\xi_1,\dots,\xi_M$ are independent Poisson processes on $\partial C_1,\dots,\partial C_M$, respectively, and intensity measure being Lebesgue measure. Thus, for every Borel subset $B\subset \partial C_i$, the distribution of the number of points $\xi_i(B)$ of $\xi_i$ in $B$ is a Poisson random variable with parameter $|B|$, and for any choice of disjoint Borel sets $B_1,\dots,B_m\subset \partial C_i$ the random variables $\xi_i(B_1),\dots,\xi_i(B_m)$ are independent. Moreover, these are independent from $\xi_j(A_1),\dots,\xi_j(A_m)$, whenever $A_1,\dots,A_m\subset \partial C_j$ are disjoint and $i\neq j$.

For convenience of readers unfamiliar with Poisson processes, we provide a brief justification of \eqref{eq:poisson}. Choose bounded measurable sets $A_i\subset \partial C_i$ and integers $k_i\in\IN_0$ for each $i$. Then \eqref{eq:poisson} is equivalent to 
\begin{equation} \label{eq:poisson2}
	\lim_{n\to\infty}\IP(\xi_{n,1}(A_1)=k_1,\dots,\xi_{n,M}(A_M)=k_M)
=\prod_{i=1}^M\IP(\xi_{i}(A_i)=k_i),
\end{equation}
where $\xi_i(A_i)$ is Poisson distributed with parameter $|A_i|$. For $n$ large enough such that the sets $v_i+\frac{1}{n}A_i, i=1,\dots,M,$ are disjoint and part of the polygon $Q$ which has $|\partial Q|=1$, and $\sum_{i=1}^{M}k_i<n$, we compute 
\begin{align*}
&\IP(|\{j\colon X_j\in v_1+\frac{1}{n}A_1\}|=k_1,\dots,|\{j\colon X_j\in v_M+\frac{1}{n}A_M\}|=k_M)\\
&=\frac{n!}{k_1!\cdots k_M!(n-\sum_{i=1}^{M}k_i)!}\Big(\frac{|A_1|}{n}\Big)^{k_1}\cdots\Big(\frac{|A_M|}{n}\Big)^{k_M}\Big(1-\frac{\sum_{i=1}^{M}|A_i|}{n}\Big)^{n-\sum_{i=1}^{M}k_i}.
\end{align*}
Taking the limit, we arrive at
\[
\lim_{n\to\infty} \IP(\xi_{n,1}(A_1)=k_1,\dots,\xi_{n,M}(A_M)=k_M)
=\frac{1}{k_1!\cdots k_M!}|A_1|^{k_1}\cdots |A_M|^{k_M} e^{-\sum_{i=1}^{M}|A_i|}.
\]
This proves \eqref{eq:poisson2}, and thus \eqref{eq:poisson}.

We continue the proof of Theorem~\ref{thm:polygon}. By Lemma~\ref{lem:hd-vertex} we have
\begin{equation} \label{Gleich}
n \, \delta_H(Q,Q_n)
=\max_{1\le i \le M} n\,  \dist(v_i,Q_n).
\end{equation}
Provided that for all $1\le i\le M$ the point process $\xi_{n, i}$ has nonempty intersection with both rays forming $\partial C_i$,  we have
\[
n\,  \delta_H(Q,Q_n)
=\max_{1\le i \le M} n\,  \dist(v_i,Q_n)
=\max_{1\le i\le M}f_i(\xi_{n, i}), 
\]
where $f_i$ continuously maps any locally finite counting measure $\eta\in \mathcal{N}_i$ to the smallest distance between the origin and the convex hull of the points of $\eta$. In fact, the probability that for some $1\le i\le M$ the process $\xi_{n, i}$ does not intersect one of the rays forming $\partial C_i$ is given by
\[
\IP(\exists i\colon X_j\not\in [v_i,v_i+La_i]\text{ for all } j=1,\dots,n \lor X_j\not\in [v_i,v_i+Lb_i]\text{ for all } j=1,\dots,n), 
\]
which,  by a union bound,  is bounded by $2M (1-L)^n$ as $|\partial Q|=1$. Therefore, 
\begin{equation} \label{eq:equivalence}
\IP(n\,  \delta_H(Q,Q_n)
\neq \max_{1\le i\le M}f_i(\xi_{n, i}))
\le 2M(1-L)^n \to 0.
\end{equation}
By the continuous mapping theorem it follows from \eqref{eq:poisson} that
\[
\max_{1\le i\le M}f_i(\xi_{n,i})
\xrightarrow[n\to\infty]{\rm d}\max_{1\le i\le M}f_i(\xi_{i}).
\]
Combined with \ref{Gleich}, we get
\[
n \delta_H(Q,Q_n)\xrightarrow[n\to\infty]{\rm d}\max_{1\le i\le M}f_i(\xi_{i}).
\]
If we can show that, for $i=1,\dots,M$, 
\begin{equation} \label{eq:claim}
\IP(f_i(\xi_i)>r)=q_i(r), \quad r>0,
\end{equation}
then
\[
\lim_{n\to\infty}\IP(n \delta_H(Q,Q_n)\le r)
= \IP(\max_{1\le i\le M}f_i(\xi_{i})\le r)
=\prod_{i=1}^M (1- q_i(r)),
\]
i.e., \eqref{eq:dist-conv} and thus the conclusion holds.

We show \eqref{eq:claim} for $1\le i\le M$ and  $r>0$. Define the random variable $T_i=\inf\{y>0\colon \xi_i([0,ya_i])\neq 0\}$, i.e., the distance to the origin of the closest point to the origin belonging to $\xi_i$ and being on one ray of $\partial C_i$. 

Observe that for any $R>0$, by the law of total probability and Lemma~\ref{lem:tangent},
\begin{align*}
\IP(f_i(\xi_i)>r,T_i\le R)
&=\int_0^R \IP(f_i(\xi_i)>r \vert T_i=y) \dd \IP(T_i\le y)\\
&=\int_{\ell_{\alpha_i}r}^R \IP(\xi_i([0,r h(y/r,\alpha_i)a_i])=0) \dd \IP(T_i\le y)\\
&=\int_{\ell_{\alpha_i}r}^R e^{-r h(y/r,\alpha_i)-y} \dd y,
\end{align*}
since $\IP(f_i(\xi_i)>r \vert T_i=y)$ is the probability that the points of $\xi_i$ can be separated from the origin by a line passing through $ya_i$ and having distance $r$ from the origin, conditional on the fact that one point lies in $ya_i$. Further, note that the distribution of $T_i$ has Lebesgue density 
\[
\frac{\partial}{\partial y}\IP(T_i\le y)
=\frac{\partial}{\partial y}\IP(\xi_i([0,ya_i])\neq 0)
=\frac{\partial}{\partial y}(1-\IP(\xi_i([0,ya_i])= 0))
=e^{-y}, \quad y>0.
\]
Letting $R\to\infty$ gives
\[
\IP(f_i(\xi_i)>r)
=\lim_{R\to\infty} \IP(f_i(\xi_i)>r,T_i\le R)
=\int_{\ell_{\alpha_i}r}^{\infty} e^{-r h(y/r,\alpha_i)-y} \dd y
=q_i(r).
\]
This completes the proof.
	
\end{proof}

\begin{remark}
	Note that a similar approach as in the proof of Theorem~\ref{thm:polygon} can be used to show the convergence in expectation
\[
\lim_{n\to\infty} \sqrt{n}\IE[\delta_{H}(Q,Q_{n})]
=\int_0^{\infty}1-\prod_{i=1}^{M}(1-p_{i}(r))\dd r,
\]
where $Q_n$ is the convex hull of $n$ independent random points distributed uniformly inside $Q$ and $p_i$ is as in \eqref{eq:bhb-integral}. 
\end{remark}

\begin{proof}[Proof of Corollary~\ref{cor:asymptotics}]
	With notation as in Theorem~\ref{thm:polygon}, it holds by \eqref{polygon-1} that
\begin{eqnarray*}
\lim_{n\to\infty}n \, \IE[\delta_H(Q,Q_n)]
	= \left| \partial \, Q \right| \int_{0}^{\infty}1-\prod_{i=1}^M\Big(1-r\int_{\ell_{\alpha_i}}^{\infty}e^{-r(y+h(y,\alpha_i))}\dd y\Big)\dd r.
\end{eqnarray*}
As by assumption $\alpha_i\ge \frac{\pi}{2}$ for all $i$, we have that  $\ell_{\alpha_i}=\frac{1}{\sin \alpha_i}$ for all $1\leq i\leq M$. Since the function $h \geq 0$, for all $1\leq i \leq M$,
$$
	r\int_{\ell_{\alpha_i}}^{\infty}e^{-r(y+h(y,\alpha_i))}\dd y
	\leq r\int_{\ell_{\alpha_i}}^{\infty}e^{-r\cdot y}\dd y
	=e^{-r\cdot \ell_{\alpha_i}},
$$
and hence
\begin{eqnarray*}
&&\prod_{i=1}^M\Big(1-r\int_{\ell_{\alpha_i}}^{\infty}e^{-r(y+h(y,\alpha_i))}\dd y\Big)
\geq \prod_{i=1}^M\big(1-e^{-r\cdot \ell_{\alpha_{i}}}\big)
\\
&&=1-\left(\sum_{k=1}^{M}(-1)^{k+1}\sum_{1\leq i_{1}<\cdots<i_{k}\leq M}
\exp(-r(\ell_{\alpha_{i_{1}}}+\cdots+\ell_{\alpha_{i_{k}}}))
\right).
\end{eqnarray*}
Since $\ell_{\alpha_i}=\frac{1}{\sin \alpha_i}$ for all $i=1,\dots,M$, we have
\begin{eqnarray*}
&&\lim_{n\to\infty}n \, \IE[\delta_H(Q,Q_n)]
\\
&&\leq\left| \partial \, Q \right| \int_{0}^{\infty}
\left(\sum_{k=1}^{M}(-1)^{k+1}\sum_{1\leq i_{1}<\cdots<i_{k}\leq M}
\exp(-r(\ell_{\alpha_{i_{1}}}+\cdots+\ell_{\alpha_{i_{k}}}))
\right)\dd r
\\
&&=\left| \partial \, Q \right| 
\sum_{k=1}^{M}(-1)^{k+1}\sum_{1\leq i_{1}<\cdots<i_{k}\leq M}
\frac{1}{\ell_{\alpha_{i_{1}}}+\cdots+\ell_{\alpha_{i_{k}}}}
\\
&&=\left| \partial \, Q \right|
\sum_{k=1}^{M}(-1)^{k+1}\sum_{1\leq i_{1}<\cdots <i_{k}\leq M}
\frac{1}{\frac{1}{\sin\alpha_{i_{1}}}+\cdots+\frac{1}{\sin\alpha_{i_{k}}}}.
\end{eqnarray*}
Now we treat the inverse inequality. For $y\geq\frac{2}{\sin\alpha}$ we have, for any $\alpha\ge \frac{\pi}{2}$,
$$
\frac{y^2}{2}\sin^{2}\alpha \leq y^2 \sin^{2}\alpha  -1.
$$
Therefore,
\begin{eqnarray*}
h(y,\alpha)
= y\frac{\sin(\alpha)\sqrt{y^2-1}-\cos(\alpha)}{y^2 \sin^2(\alpha)-1}
\leq y\frac{\sin(\alpha)y-\cos\alpha}{y^2 \sin^2(\alpha)-1}
\leq\frac{2}{\sin\alpha}-\frac{2\cos\alpha}{y\sin^{2}\alpha}
\leq\frac{3}{\sin\alpha}.
\end{eqnarray*}
It follows that
\[
r\int_{\ell_{\alpha}}^{\infty}e^{-r(y+h(y,\alpha))}\dd y
\geq r\int_{2\ell_{\alpha}}^{\infty}e^{-r(y+\frac{3}{\sin\alpha})}\dd y
=re^{-r\frac{3}{\sin\alpha}}\int_{2\ell_{\alpha}}^{\infty}e^{-ry}\dd y
=e^{-r\frac{5}{\sin\alpha}}.
\]
We complete the proof with 
\begin{eqnarray*}
&&\lim_{n\to\infty}n \, \IE[\delta_H(Q,Q_n)]
= \left| \partial \, Q \right| \int_{0}^{\infty}1-\prod_{i=1}^M\big(1-r\int_{\ell_{\alpha_i}}^{\infty}e^{-r(y+h(y,\alpha_i))}\dd y\big)\dd r
\\
&&\geq\left| \partial \, Q \right| \int_{0}^{\infty}1-\prod_{i=1}^M
\big(1-e^{-r\frac{5}{\sin\alpha_{i}}}\big)\dd r
\\
&&=\left| \partial \, Q \right| \int_{0}^{\infty}\left(\sum_{k=1}^{M}(-1)^{k+1}\sum_{1\leq i_{1}<\cdots<i_{k}\leq M}
\exp\left(-5r\left(\frac{1}{\sin\alpha_{i_{1}}}+\cdots+\frac{1}{\sin\alpha_{i_{k}}}\right)\right)
\right)\dd r
\\
&&=\frac{1}{5}\left| \partial \, Q \right| \left(\sum_{k=1}^{M}(-1)^{k+1}\sum_{1\leq i_{1}<\cdots<i_{k}\leq M}
\frac{1}{\frac{1}{\sin\alpha_{i_{1}}}+\cdots+\frac{1}{\sin\alpha_{i_{k}}}}
\right).
\end{eqnarray*}
\end{proof}

\begin{proof}[Proof of Proposition~\ref{pro:smooth}]
Define for $n\in\IN$ the random variable 
\[
X_n=\Big(\frac{n}{\log n}\Big)^{2/(d-1)}\delta_H(K,K_n).
\]
We deduce from tail bounds given in \cite{DW96} that $X_n\xrightarrow{\IP} c_K$ implies $\IE[X_n] \to c_K$.

As $K$ is $C^3_+$, there is in particular at each boundary point $x \in \partial K$ a unique normal $N(x)$ and there exists $\ell>0$ such that 
$|N(x)-N(y)| \leq \ell |x-y|$ for all $x,y \in \partial K$, see \cite {DW96}.
Then, for $w>0$ and $n\in\IN$,
\[
\IP(X_n> \ell \, w )
=\IP\Big(\delta_H(K,K_n)> \ell \, w \Big(\frac{\log n}{n}\Big)^{2/(d-1)}\Big)
=\IP(\delta_H(K,K_n)> \ell \delta_n^2),
\]
where we set $\delta_n=\sqrt{w} \Big(\frac{\log n}{n}\Big)^{1/(d-1)}$.  Thus, by \cite[Thm. 1(b)]{DW96}, we have
\[
\IP(X_n> \ell \, w )
\le \IP(\partial K\not\subset B(Z_n,\delta_n)),
\]
where $Z_n=\{X_1,\dots,X_n\}$ and $B(Z_n,\delta_n)=\bigcup_{i=1}^n B_2^2(X_i,\delta_n)$ is a union of random Euclidean balls. From the proof of \cite[Lem. 1]{DW96}, we get
\[
\IP(\partial K\not\subset B(Z_n,\delta_n))
\le (4R+\alpha)^d (\delta_n/2)^{-d}\exp(-\alpha n (\delta_n/2)^{d-1})
\]
for some constants $R,\alpha>0$ and both,  $w$ and $n$ sufficiently large. This yields $c,C>0$ independent of $w$ and $n$ such that
\begin{equation} \label{eq:tail-Xn}
\IP(X_n\ge \ell \, w )
\le C w^{-d/2} \Big(\frac{n}{\log n}\Big)^{d/(d-1)} \exp(-c w^{(d-1)/2} \log n).
\end{equation}

Pick $S>\max\{c_K,\ell\}$. Then
\begin{equation} \label{eq:decomposition}
\IE [X_n]
=\int_0^{\infty} \IP(X_n\ge w) \dd w
=\int_0^{S} \IP(X_n\ge  w) \dd w
+\ell^{-1}\int_{S\, \ell^{-1}}^{\infty} \IP(X_n> \ell \, w ) \dd w.
\end{equation}
Since $X_n\xrightarrow{\IP} c_K$, by the Dominated Convergence theorem, the first summand satisfies
\[
\int_0^{S} \IP(X_n\ge  w) \dd w
\to \int_0^{S} \IP(c_K\ge  w) \dd w
=c_K.
\]
Using \eqref{eq:tail-Xn}, we will show that the second summand in \eqref{eq:decomposition} tends to zero. It satisfies
\[
\ell^{-1}\int_{S\, \ell^{-1}}^{\infty} \IP(X_n> \ell \, w) \dd w
\le C  \Big(\frac{n}{\log n}\Big)^{d/(d-1)} \ell^{-1}\int_{S\, \ell^{-1}}^{\infty} w^{-d/2} \exp(-c w^{(d-1)/2} \log n)  \dd w.
\]
Since $L:=S\, \ell^{-1}> 1$, we estimate the integral from above using $w^{-d/2}\le 1$ for $w\ge L$. Then it remains to bound, for $\alpha=\frac{d-1}{2}$ and $c_n=c\log n$,
\[
\int_{L}^{\infty} e^{-c_n w^\alpha }  \dd w
= c_n^{-1/\alpha} \int_{c_n^{1/\alpha}L}^{\infty} e^{-w^\alpha}  \dd w.
\]
The tail bound
\[
\int_t^{\infty} e^{-u^{\alpha}}\dd u 
\le \int_t^{\infty} \frac{u^{\alpha-1}}{t^{\alpha-1}}e^{-u^\alpha}\dd u 
= \frac{e^{-t^\alpha}}{\alpha t^{\alpha -1}},\quad t>0,
\]
gives
\[
\int_{L}^{\infty} e^{-c_n w^\alpha }  \dd w
\lesssim  n^{-cL^{\alpha}},
\]
which goes to zero faster than any fixed polynomial if $S$ is chosen large enough. Thus, the second summand in \eqref{eq:decomposition} tends to zero and the proof is complete.

\end{proof}

\begin{proof}[Proof of Proposition~\ref{pro:barany}]
Let $v_1,\dots,v_M$ be the vertices of $Q$ and $Q_n=[X_1,\dots,X_n]$, where $X_1,\dots,X_n$ are sampled independently at random from $\partial Q$. Then Lemma~\ref{lem:hd-vertex} implies
\[
n^{1/(d-1)}\IE[\delta_H(Q,Q_n)]
=\int_0^{\infty}\IP\big(\max_{1\le i\le M}\dist(v_i,Q_n)\ge rn^{-1/(d-1)}\big)\dd r,\quad n\in\IN.
\]
Let $n\in\IN$ and $r>0$. If $rn^{-1/(d-1)}\ge \diam(Q)$ the integrand vanishes. Otherwise, a union bound gives
\[
\IP\big(\max_{1\le i\le M}\dist(v_i,Q_n)\ge rn^{-1/(d-1)}\big)
\le \sum_{i=1}^{M}\IP(\dist(v_i,Q_n)\ge rn^{-1/(d-1)}).
\]
For all $i=1,\dots,M$, if $\dist(v_i,Q_n)\ge rn^{-1/(d-1)}$, then 
  \[
    B^2_2(v_i,rn^{-1/(d-1)})\cap \{X_1,\dots,X_n\}=\emptyset.
  \] 
	Further, there exists a constant $c=c(Q)\in (0,\infty)$ such that, for all $i=1,\dots,M$ and for all $\varrho\in (0,\diam(Q)]$, 
\begin{equation} \label{eq:volume-ball}
\IP(X_1\in B^2_2(v_i,\varrho))\ge c\, \varrho^{d-1}.
\end{equation}
Therefore, for all $i=1,\dots,M$, 
\begin{equation}\label{Prop1}
\IP\big(\dist(v_{i},Q_n)\ge rn^{-1/(d-1)}\big)\le (1-cr^{d-1}/n)^n\le e^{-cr^{d-1}}.
\end{equation}
Summing over $i=1,\dots,M$ and integrating over $r\in (0,\infty)$ gives an upper bound, independent of $n$, on the quantity $n^{1/(d-1)}\IE[ \delta_H(Q,Q_n)]$.

For the lower bound we note that for every $i=1,\dots,M$ there exists $c_i\in (0,\infty)$ such that for all $\varrho>0$ and all point sets $\{x_1,\dots,x_n\}\subset \partial Q$, we have
\begin{equation} \label{eq:dist-empty}
	B^2_2(v_i,c_i \varrho)\cap \{x_1,\dots,x_n\}=\emptyset \quad \Rightarrow \quad \dist(v_i,[x_1,\dots,x_n])\ge \varrho.
\end{equation}
Before we prove this relation, we use it to complete the proof of the lower bound. Let $C=\max_{1\le i\le M}c_i$ and let $R>0$ be small enough such that the balls $B^2_2(v_i,C\, R), 1\le i\le M,$ are pairwise disjoint. Let $r\in (0,R]$. Then \eqref{eq:dist-empty} implies that, for all $n\in\IN$,
\begin{align*}
\IP(\delta_H(Q,Q_n)\ge rn^{-1/(d-1)})
&\ge \IP(\{X_1,\dots,X_n\}\cap\bigcup_{i=1}^M B^2_2(v_i,C \, r\, n^{-1/(d-1)})=\emptyset)\\
&= (1-c\, C^{d-1}r^{d-1}/n)^n,
\end{align*}
which is at least $\frac{1}{2}$ provided that $R$ is small enough. 
Then we have,  for all $n\in\IN$,
\[
n^{1/(d-1)}\IE[\delta_H(Q,Q_n)]
\ge \int_0^R \IP(\delta_H(Q,Q_n)\ge rn^{-1/(d-1)})\dd r
\ge R/2.
\]
This  completes the lower bound since $R$ depends only on $C$ and thus only on $Q$.

It remains to show \eqref{eq:dist-empty}. Fix $1\le i\le M$ and $\varrho>0$. We will find $c_i>0$ such that for any point set $\{x_1,\dots,x_n\}\subset \partial Q$ with $B^2_2(v_i,\varrho)\cap \{x_1,\dots,x_n\}=\emptyset$, we have 
\[
\dist(v_i,[x_1,\dots,x_n])\ge c_i^{-1}\varrho.
\]
The distance $\dist(v_i,[x_1,\dots,x_n])$ becomes minimal if $A=\{x_1,\dots,x_n\}\cap \partial B^2_2(v_i,\varrho)$ contains at least two points, say $x$ and $y$. Then $\dist(v_i,[x_1,\dots,x_n])$ is equal to the minimal distance between $v_i$ and the line connecting $x$ and $y$. A brief calculation using Lemma~\ref{lem:triangle} shows that, for any $x,y\in A$, this distance is equal to 
\begin{equation} \label{eq:height-2}
	\rho \, \frac{\sqrt{1+\cos(\alpha_i)}}{\sqrt{2}},\quad\text{where}\quad\alpha_i=\angle(x,v_i,y)\in (0,\pi),
\end{equation}
thus concluding the proof.
\end{proof}

\section*{Acknowledgments}

JP is supported by the German Research Foundation (DFG) under project 516672205 and by the Austrian Science Fund (FWF) under project P-32405.

\par
\noindent
Part of this work has been carried out during MS's stay at the Department of Mathematics of CWRU whose hospitality is gratefully acknowledged.  MS's stay was supported by the State of Upper Austria.  MS is supported by the Austrian Science Fund (FWF) under project P-32405. This research was funded in whole or in part by the Austrian Science Fund (FWF) [Grant DOI: 10.55776/P32405; 10.55776/J4777]. 
\par
\noindent
EMW is  supported by NSF grant DMS-2103482.
\par
\noindent
Further, we thank the Hausdorff Research Institute for Mathematics, University of Bonn, for providing an excellent  working environment during the Dual Trimester Program \emph{Synergies between modern probability, geometric analysis and stochastic geometry}.

\bibliographystyle{plain}
\bibliography{convexapp}
\end{document}